\newtheorem{theorem}{Theorem}
\newtheorem{lemma}{Lemma}
\newtheorem{proposition}{Proposition}
\newtheorem{remark}{Remark}
\newtheorem{definition}{Definition}
\journal{Elsevier}
\begin{document}

\begin{frontmatter}

\title{A novel second-order nonstandard finite difference method for solving one-dimensional autonomous dynamical systems}


\author[Manh Tuan Hoang]{Manh Tuan Hoang\corref{mycorrespondingauthor}}
\cortext[mycorrespondingauthor]{Corresponding author}
\ead{tuanhm14@fe.edu.vn; hmtuan01121990@gmail.com}
\address[mymainaddress]{Department of Mathematics, FPT University, Hoa Lac Hi-Tech Park, Km29 Thang Long Blvd, Hanoi, Viet Nam}
\begin{abstract}
{
In this work, a novel second-order nonstandard finite difference (NSFD) method that preserves simultaneously the positivity and local asymptotic stability of one-dimensional autonomous dynamical systems is introduced and analyzed. This method is based on novel non-local approximations for right-hand side functions of differential equations in combination with nonstandard denominator functions. The obtained results not only resolve the contradiction between the dynamic consistency and high-order accuracy of NSFD methods but also improve and extend some well-known results that have been published recently in [Applied Mathematics Letters 112(2021) 106775], [AIP Conference Proceedings 2302(2020) 110003] and [Applied Mathematics Letters 50(2015) 78-82]. Furthermore, as a simple  but important application, we apply the constructed NSFD method for solving the logistic, sine, cubic, and Monod equations; consequently, the NSFD schemes constructed in the earlier work  [Journal of Computational and Applied Mathematics 110(1999) 181-185] are improved significantly. Finally, we report some numerical experiments to support and illustrate the theoretical assertions as well as advantages of the constructed NSFD method.
} 
\end{abstract}
\begin{keyword}
NSFD method \sep Second-order \sep  Positivity \sep Elementary stable \sep Autonomous dynamical system 
\MSC[2010] 	37M05, 37M15,		65L05, 	65P99
\end{keyword}
\end{frontmatter}
\section{Introduction}
The concept of nonstandard finite difference (NSFD) schemes was first introduced by Mickens in 1980 to overcome drawbacks of standard finite difference ones \cite{Anguelov, Mickens1, Mickens2, Mickens3, Mickens4, Mickens5, Patidar1, Patidar2}. Nowadays, NSFD schemes have been widely used as one of the powerful and efficient approaches for solving differential equation models arising in both theory and applications. The main advantage of NSFD schemes over standard ones is that they can  preserve important qualitative properties of solutions of the corresponding differential equations regardless of chosen step sizes.  However, most of NSFD schemes are only of first order accuracy. For this reason, there have been many attempts to formulate higher-order NSFD schemes for ordinary differential equations (ODEs) \cite{Chen-Charpentier, Gupta, DH1, Kojouharov, MVaquero1, MVaquero2}.\par
In very recent works \cite{Gupta, Kojouharov}, second-order NSFD methods preserving the local asymptotic stability (LAS) of one-dimensional autonomous ODEs have been introduced. These results partially resolved the contradiction between the dynamic consistency and high-order accuracy of NSFD schemes; however, the constructed methods did not preserve the positivity of solutions of differential equations.\par
Motivated and inspired by this, in this work we construct a novel NSFD method that not only preserves simultaneously the positivity and LAS  of one-dimensional autonomous dynamical systems but also is accurate of order $2$. The method is based on novel non-local approximations for right-hand side functions of differential equations in combination with nonstandard denominator functions. The obtained results not only resolve the contradiction between the dynamic consistency and high-order accuracy of NSFD schemes but also improve and extend the well-known results constructed in \cite{Gupta, Kojouharov, Wood}. { Moreover, it should be emphasized that elementary stable NSFD schemes  introduced in some previous works \cite{Anguelov1, Dimitrov, Dimitrov1}  are only of first order accuracy and not positive}.\par
{
It is worthy noting that our constructed NSFD method is completely different from NSFD schemes formulated in \cite{Wood} and \cite{Mickens1, Mickens2, Mickens3, Mickens4, Mickens5}. More clearly, we use non-local approximations based on novel representations of right-hand side functions and especially,  denominator functions depend not only on the step size but also on numerical approximations, meanwhile, the NSFD schemes introduced in \cite{Mickens1, Mickens2, Mickens3, Mickens4, Mickens5} and \cite{Wood} used denominator functions only depend on the step size. Moreover, the NSFD scheme formulated in \cite{Wood} used a non-local approximation based on the sign of the right-hand side functions at each iterative step; however, our method does not require the determination of the sign of the right-hand functions. In fact, the positive and elementary stable  NSFD method in \cite{Wood} can be considered as a particular case of our method, but it is only of first-order accuracy. This will be confirmed by a numerical example performed in Subsection \ref{subsec4.1}.\par
}
Importantly, the present idea used to construct the NSFD method can be developed for formulating dynamically consistent NSFD methods having high-order accuracy for general multi-dimensional autonomous and non-autonomous dynamical systems, which are governed by ODEs, fractional-order ODEs and partial differential equations. Furthermore, the constructed method is fully explicit; hence, it is easy to be implemented and can be combined with extrapolation techniques to improve the accuracy.  \par
{ As a simple but important application, we apply the constructed NSFD method for solving  the logistic, sine, cubic, and Monod equations, which were considered in \cite{Mickens6}, and general multi-dimensional dynamical systems. The result is that we obtain positive and elementary stable NSFD schemes of seconder-order accuracy for these equations. Meanwhile, the NSFD schemes constructed in \cite{Mickens6} are only accurate of order $1$. Therefore, the results constructed in \cite{Mickens6} are improved significantly.}\par
This work is organized as follows. In Section \ref{sec2}, we provide some definitions and preliminaries. The NSFD method is constructed and analyzed in Section \ref{sec3}. Section \ref{sec4} reports a set of numerical simulations and some applications of the NSFD method. The last section presents some remarks and conclusions.
\section{Definitions and preliminaries}\label{sec2}
Consider general autonomous ODEs of the form 
\begin{equation}\label{eq:1}
\dfrac{dy}{dt} = f(y), \quad t \geq 0, \quad y(0) = y_0,
\end{equation}
where $y: \mathbb{R} \to \mathbb{R}$, $f \in C^2(\mathbb{R})$ and $y_0 \in \mathbb{R}$. Besides, we consider a general finite difference method for Eq. \eqref{eq:1}
\begin{equation}\label{eq:1a}
D_h(y_n) = F_h(f; y_n),
\end{equation}
where $D_h(y_n) \approx \dfrac{dy}{dt}|_{t = t_n}$, $F_h(f; y_n) \approx f(y(t_n))$, $y_n \approx y(t_n)$, $t_n = nh$ and $h > 0$ is the step size.
{
\begin{definition}[Positivity-preserving numerical schemes \cite{Mickens1, Mickens2, Mickens3, Mickens4, Mickens5, Wood}]
The finite difference method \eqref{eq:1a} is called positive, if, for any value of the step size $h$, and
$y_0 \in \mathbb{R}^+ := \{y \in \mathbb{R}|\,  y\geq 0\}$ its solution remains positive, i.e., $y_n \in \mathbb{R}^+$ for all $n \in \mathbb{N}$.
\end{definition}
\begin{definition}[Elementary stable numerical schemes \cite{Anguelov, Dimitrov}]
The finite difference method \eqref{eq:1a} is said to be elementary stable if, for any value of
the step-size $h$, its only fixed points $y^*$ are the same as the equilibria of the ODE \eqref{eq:1} and the local stability properties of each $y^*$ are the same for both the differential equation and the difference method.
\end{definition}
\begin{definition}[Concept of NSFD schemes \cite{Anguelov, Mickens1, Mickens2, Mickens3, Mickens4, Mickens5}]
The one-step finite difference scheme \eqref{eq:1a} for solving Eq. \eqref{eq:1} is called a nonstandard
finite difference (NSFD) method  if at least one of the following conditions is satisfied:
\begin{itemize}
\item $D_h(y_n) = \dfrac{y_{n + 1} - y_n}{\varphi(h)}$, where $\varphi(h) = h + \mathcal{O}(h^2)$   is a non-negative function;
\item $F_h(f; y_n) = g(y_k, y_{k + 1}, h)$, where $g(y_k, y_{k+1}, h)$ is a non-local approximation of the right-hand side of Eq. \eqref{eq:1}.
\end{itemize}
\end{definition}
}
We recall that Eq. \eqref{eq:1} is said to be \textit{positive} if $y(t) \geq 0$ holds for all $t \geq 0$ whenever $y_0 \geq 0$. Thanks to \cite[Lemma 1]{Horvath}, we obtain the following result on the positivity of solutions of Eq. \eqref{eq:1}.
\begin{lemma}\label{plemma}
The set $\mathbb{R}^+$ is a positively invariant set of Eq. \eqref{eq:1} (i.e, $y(t) \geq 0$ for all $t \geq 0$ whenever $y_0 \geq 0$) if and only if $f(0) \geq 0$.
\end{lemma}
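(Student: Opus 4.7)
My plan is to prove the two implications separately, using the $C^2$ regularity of $f$ (which guarantees local Lipschitz continuity, hence existence and uniqueness of solutions to the Cauchy problem \eqref{eq:1} by Picard--Lindel\"of).

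For the necessity ($\mathbb{R}^+$ invariant $\Rightarrow f(0)\ge 0$), I would specialize to the initial condition $y_0 = 0$. Since $\mathbb{R}^+$ is positively invariant, the corresponding solution satisfies $y(t) \ge 0 = y(0)$ for all $t \ge 0$. Taking the right derivative at $t=0$ gives
\begin{equation*}
f(0) = f(y(0)) = y'(0^+) = \lim_{t \to 0^+} \frac{y(t) - y(0)}{t} \ge 0,
\end{equation*}
which is exactly the desired inequality.

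For the sufficiency ($f(0)\ge 0 \Rightarrow \mathbb{R}^+$ invariant), I would argue by contradiction. Suppose $y_0 \ge 0$ yet there exists $t_0>0$ with $y(t_0) < 0$. Set
\begin{equation*}
t^* := \sup\{t \in [0,t_0] : y(s) \ge 0 \text{ for all } s \in [0,t]\}.
\end{equation*}
Continuity of $y$ together with $y(0)\ge 0$ and $y(t_0)<0$ forces $y(t^*)=0$, and there is a sequence $t_n \downarrow t^*$ with $y(t_n)<0$. Computing the right derivative at $t^*$ from this sequence yields $y'(t^*) \le 0$, while the ODE gives $y'(t^*) = f(y(t^*)) = f(0) \ge 0$; hence $f(0) = 0$. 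But then the constant function $z \equiv 0$ is another solution of \eqref{eq:1} passing through $0$ at time $t^*$, and forward uniqueness of the Cauchy problem implies $y(t)\equiv 0$ for all $t \ge t^*$, contradicting $y(t_0) < 0$.

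I do not foresee a serious obstacle; the argument is standard. The only delicate point is isolating the first time $t^*$ at which $y$ reaches zero from above and then invoking uniqueness to forbid the solution from crossing, which is exactly where the $C^2$ hypothesis on $f$ is essential (a mere continuity hypothesis would allow non-uniqueness of the zero solution and invalidate the final step).
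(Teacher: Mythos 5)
Your proof is correct, and it is genuinely different in character from what the paper does: the paper offers no proof at all, but simply invokes Lemma~1 of Horv\'ath's paper on positivity of Runge--Kutta methods, which is a general criterion for the nonnegative orthant to be positively invariant for an ODE system (in one dimension it reduces exactly to the subtangent condition $f(0)\ge 0$). Your argument reconstructs that criterion from scratch: the necessity direction via the sign of the right derivative at $y_0=0$ is the standard one, and the sufficiency direction via the first exit time $t^*$, the two-sided estimate forcing $y'(t^*)=f(0)=0$, and forward uniqueness against the constant solution $z\equiv 0$ is complete and sound. You are also right to single out where regularity enters: with $f$ merely continuous the conclusion fails (e.g.\ $f(y)=-3|y|^{2/3}$ has $f(0)=0$ yet admits the solution $y(t)=-t^3$ leaving $\mathbb{R}^+$ from $y_0=0$), so local Lipschitz continuity --- amply supplied by the $C^2$ hypothesis --- is exactly the needed ingredient. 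What the citation buys the paper is brevity and a statement valid for multidimensional systems; what your argument buys is a self-contained, elementary proof that makes visible precisely why the hypothesis $f(0)\ge 0$ together with uniqueness is both necessary and sufficient in the scalar case.
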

\section{Construction of the nonstandard finite difference method}\label{sec3}
\subsection{Auxiliary results}
We first introduce some notations
\begin{equation*}
\begin{split}
&\mathcal{F}^+ = \big\{f: \mathbb{R}_+ \to \mathbb{R}\big|\,f(y) \geq 0 \quad \text{for all} \quad y \geq 0\big\},\\
&\mathcal{F}^- = \big\{f: \mathbb{R}_+ \to \mathbb{R}\big|\,f(y) \leq 0 \quad \text{for all} \quad y \geq 0\big\},\\
&\mathcal{B}^L = \big\{f: \mathbb{R}_+ \to \mathbb{R}\big|\, f(y) \geq L \quad \text{for all}\quad y \geq 0\big\} \quad \text{for each} \quad M \geq 0.
\end{split}
\end{equation*}
It is easy to see that $\mathcal{B}^L $ is not empty for any $L \geq 0$, for example, $f(y) = L$, $f(y) = y^2 + L$ and $f(y) = Le^{y}$ belong to $\mathcal{B}^L$. Moreover, the set $\mathcal{B}^L \cap C^k(\mathbb{R}^+)$ ($k \geq 1$) is also not empty.
\begin{lemma}\label{Lemma1}
Let $f: \mathbb{R}^+ \to \mathbb{R}$ be a function  satisfying:
\begin{enumerate}[(i)]
\item $f(y) \in C^k(\mathbb{R}^+)$, \quad $k \geq 1$.
\item $f$ has a finite number of zeros, i.e, the equation $f(y) = 0$ has only a finite number of solutions.
\end{enumerate}
Then $f$ can be represented in the form
\begin{equation*}
f(y) = f^+(y) + f^{-}(y), \quad f^+, f^- \in C^k(\mathbb{R}^+),  \quad f^+ \in \mathcal{F}^+, \quad f^- \in \mathcal{F}^-.
\end{equation*}
\end{lemma}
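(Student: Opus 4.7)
The plan is to avoid the naive choice $f^+(y):=\max\{f(y),0\}$ and $f^-(y):=\min\{f(y),0\}$. This choice satisfies the sign and sum requirements, but it typically fails to lie in $C^1(\mathbb{R}^+)$ at a transversal zero of $f$ (e.g.\ when $f(z)=0$ and $f'(z)\neq0$). Instead, the natural idea is to replace the non-smooth pointwise maximum/minimum by a smooth surrogate of the absolute value, and then split $f$ symmetrically around it.

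Concretely, I would introduce the auxiliary function
\[
\varphi(y):=\sqrt{f(y)^{2}+1},\qquad y\ge 0,
\]
and set
\[
f^{+}(y):=\tfrac{1}{2}\bigl(f(y)+\varphi(y)\bigr),\qquad f^{-}(y):=\tfrac{1}{2}\bigl(f(y)-\varphi(y)\bigr).
\]
Three things then have to be verified. First, $f^{+}(y)+f^{-}(y)=f(y)$ trivially. Second, because $f\in C^{k}(\mathbb{R}^{+})$ and $f(y)^{2}+1\ge 1>0$ for every $y\ge 0$, the chain rule yields $\varphi\in C^{k}(\mathbb{R}^{+})$, so $f^{\pm}\in C^{k}(\mathbb{R}^{+})$. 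Third, the elementary inequality $\sqrt{t^{2}+1}\ge |t|$, applied at $t=f(y)$, gives $\varphi(y)\ge |f(y)|\ge -f(y)$, hence $f^{+}(y)\ge 0$ for all $y\ge 0$; and $\varphi(y)\ge |f(y)|\ge f(y)$, hence $f^{-}(y)\le 0$. Thus $f^{+}\in\mathcal{F}^{+}$ and $f^{-}\in\mathcal{F}^{-}$, as required.

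There is essentially no hard step in the argument; the only subtlety is to recognize that a smooth substitute for $|f|$ is needed and that $\sqrt{f^{2}+1}$ does the job while remaining $C^{k}$ because its radicand stays bounded away from zero. I would also remark that hypothesis (ii) on the finiteness of the zero set of $f$ is in fact not used by this construction, so the lemma holds under the weaker assumption $f\in C^{k}(\mathbb{R}^{+})$ alone; the finite-zero condition is presumably kept because it is convenient for the subsequent analysis of the NSFD scheme.
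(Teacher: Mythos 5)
Your proof is correct, and it takes a genuinely different route from the paper. The paper argues by cases: it lists the zeros $0\le y_1\le\dots\le y_m$ of $f$, uses the finiteness hypothesis to conclude that $f$ has constant sign on $(y_m,\infty)$, sets $M=\max\{|\min_{[0,y_m]}f|,\ |\max_{[0,y_m]}f|\}$, and then shifts $f$ by an auxiliary function $g\in\mathcal{B}^M\cap C^k(\mathbb{R}^+)$ (e.g.\ $f^+=f+g$, $f^-=-g$ when $f>0$ beyond $y_m$). Your construction $f^{\pm}=\tfrac12\bigl(f\pm\sqrt{f^2+1}\bigr)$ replaces all of this with a single explicit formula: the verification of the sum, the sign conditions via $\sqrt{t^2+1}\ge|t|$, and the $C^k$ regularity (radicand bounded below by $1$) are all immediate, and you correctly observe that hypothesis (ii) is not needed at all — a genuine strengthening, since the paper's argument relies on (ii) both to obtain a last zero $y_m$ past which $f$ has constant sign and to take the min/max over the compact interval $[0,y_m]$. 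What the paper's approach buys in exchange is flexibility: the free choice of $g$ (e.g.\ a constant) can yield simpler $f^{\pm}$ than $\tfrac12\bigl(f\pm\sqrt{f^2+1}\bigr)$, which matters downstream because $f^-$ enters the denominator of the resulting NSFD scheme; the accompanying Remark also turns that proof into an explicit selection procedure. One cosmetic point: rename your auxiliary function, since $\varphi$ is already reserved in the paper for the nonstandard denominator function $\varphi(h,y)$.
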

\begin{proof}
Assume that $0 \leq y_1 \leq y_2 < \ldots \leq y_m$ with $m < \infty$ are zeros of $f$. We set
\begin{equation*}
l = \min_{y \in [0, y_m]}f(y), \quad L = \max_{y \in [0, y_m]}f(y), \quad M = max\{|l|,\,\,|L|\}.
\end{equation*}
Since $f$ has only a finite number of zeros, $f(y)$ does not change sign when $y > y_m$. We consider two following cases for the sign of $f(y)$ when $y > y_m$.\\
\textbf{Case 1:} The case when $f(y) > 0$ for all $y > y_m$. In this case, we take
\begin{equation*}
f^+(y) = f(y) + g(y), \qquad f^-(y) = -g(y),  \quad \text{for some} \quad g \in \mathcal{B}^M \cap C^k(\mathbb{R}^+).
\end{equation*}
Obviously, $f^-(y) \leq 0$ for all $y \geq 0$. On the other hand, since $g \in B^M$, $g(y) \geq M$ for all $y \geq 0$. Consequently, for $y \in [0, y_m]$
\begin{equation*}
f^+(y) = f(y) + g(y) \geq l + M \geq l + |l| \geq 0.
\end{equation*}
Hence, $f^+(y) \geq 0$ for all $y \geq 0$.\\
\textbf{Case 2:} The case when $f(y) < 0$ for all $y > y_m$. In this case, we choose
\begin{equation*}
f^-(y) = f(y) - g(y), \qquad f^+(y) = g(y),  \quad \text{for some} \quad g \in \mathcal{B}^M \cap C^k(\mathbb{R}^+).
\end{equation*}
Obviously, $f^+(y) \geq 0$ for all $y \geq 0$. Furthermore
\begin{equation*}
f^-(y) = f(y) - f(y) \leq L - g(y) \leq L - M \leq L - |L| = 0 \quad \text{for all} \quad y \in [0, y_m].
\end{equation*}
Hence, $f^-(y) \geq 0$ for all $y \geq 0$.\\
The proof is completed by combining Cases 1 and 2.
\end{proof}
{
\begin{remark}\label{remark1a}
Based on the proof of Lemma \ref{Lemma1}, we can write a procedure to determine representations of the function $f$ as follows:\\
Step 1. Determine all zeros the function $f$, i.e., solve the equation $f(y) = 0$ (using numerical methods such as Newton's method if needed). Since $f$ has only  only a finite number of zeros, this step is always done. Suppose that $0 \leq y_1 \leq y_2 \leq \ldots \leq y_m$ with $m < \infty$ are all the zeros of $f$.\\
Step 2. Compute (by using numerical methods if needed)
\begin{equation*}
l = \min_{y \in [0, y_m]}f(y), \quad L = \max_{y \in [0, y_m]}f(y), \quad M = max\{|l|,\,\,|L|\}.
\end{equation*}
Step 3. Determine the sign (positive or negative) of the function $f$ on $(y_m, \infty)$.  This step is always done because $f$ has only  only a finite number of zeros.\\
Step 4. If $f(y) > 0$ for all $y > y_m$, we take
\begin{equation*}
f^+(y) = f(y) + g(y), \qquad f^-(y) = -g(y),  \quad \text{for some} \quad g \in \mathcal{B}^M \cap C^k(\mathbb{R}^+).
\end{equation*}
If $f(y) < 0$ for all $y > y_m$, we choose
\begin{equation*}
f^-(y) = f(y) - g(y), \qquad f^+(y) = g(y),  \quad \text{for some} \quad g \in \mathcal{B}^M \cap C^k(\mathbb{R}^+).
\end{equation*}
Note that the set $\mathcal{B}^M \cap C^k(\mathbb{R}^+)$ is always nonempty for each $M > 0$.
\end{remark}
}
\begin{theorem}\label{theorem1}
Let $f: \mathbb{R}_+ \to \mathbb{R}$ be a function satisfying:
\begin{enumerate}[(i)]
\item $f(y) \in C^2(\mathbb{R}^+)$.
\item $f$ has a finite number of zeros.
\item $f(0) \geq 0$.
\end{enumerate}
Then $f$ can be represented in the form
\begin{equation}\label{eq:01}
f(y) = f^+(y) + yf^{-}(y), \quad f^+, f^- \in C^{1}(\mathbb{R}^+),  \quad f^+ \in \mathcal{F}^+, \quad f^- \in \mathcal{F}^-.
\end{equation}
\end{theorem}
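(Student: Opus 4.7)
The plan is to first isolate $f(0)$ via the integral representation $f(y) = f(0) + y\,h(y)$, and then to decompose the auxiliary function $h$ into smooth non-negative and non-positive parts. Since the target form $f^+(y) + y f^-(y)$ carries a factor of $y$ in front of $f^-$, the constant $f(0)$ can be absorbed completely into $f^+$; this is precisely what sidesteps the central obstruction to applying Lemma~\ref{Lemma1} verbatim.

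Concretely, I would set
\[
h(y) := \int_0^1 f'(sy)\, ds,
\]
which lies in $C^1(\mathbb{R}^+)$ by the $C^2$-regularity of $f$ and differentiation under the integral sign, and which satisfies $f(y) = f(0) + y\, h(y)$ by the fundamental theorem of calculus. Next, I would split $h = h^+ + h^-$ in a smooth manner, for example by
\[
h^+(y) := \tfrac{1}{2}\bigl(\sqrt{h(y)^2 + 1} + h(y)\bigr), \qquad h^-(y) := \tfrac{1}{2}\bigl(h(y) - \sqrt{h(y)^2 + 1}\bigr),
\]
both of which are $C^1$ because $h^2 + 1$ is strictly positive, and which satisfy $h^+ \geq 0$, $h^- \leq 0$ via $\sqrt{t^2+1} \geq |t|$. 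Finally, define
\[
f^+(y) := f(0) + y\, h^+(y), \qquad f^-(y) := h^-(y).
\]
The required properties then drop out by direct verification: $f^+(y) \geq f(0) \geq 0$, $f^-(y) \leq 0$, both $f^\pm \in C^1(\mathbb{R}^+)$, and $f^+(y) + y f^-(y) = f(0) + y\bigl(h^+(y) + h^-(y)\bigr) = f(0) + y\, h(y) = f(y)$.

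The main conceptual obstacle is explaining why Lemma~\ref{Lemma1} cannot simply be invoked on $f$ and be done with it: a direct application yields a representation $f = g^+ + g^-$ in which $g^-(0)$ is typically strictly negative, and the target form $g^-(y) = y\, r(y)$ with $r$ continuous then forces $g^-(0) = 0$, a contradiction. The resolution uses the newly-imposed hypothesis $f(0) \geq 0$ crucially: by peeling off the constant $f(0)$ at the outset, the remaining increment $f(y) - f(0)$ already vanishes at the origin and can be written as $y$ times a $C^1$ function by the integral formula above. The sign decomposition of that $C^1$ factor is then carried out smoothly by the $\sqrt{h^2 + 1}$ device (or, when $f(0) = 0$, equivalently by applying Lemma~\ref{Lemma1} to $h$, which in that sub-case inherits finitely many zeros from $f$). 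The regularity hypothesis $f \in C^2$ is used exactly once, to guarantee $h \in C^1$ so that $f^\pm \in C^1$ in the end.
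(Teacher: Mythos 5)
Your proof is correct, and it takes a genuinely different route from the paper's. The paper splits into the cases $f(0)=0$ and $f(0)>0$, defines the quotient $g(y)=f(y)/y$ (with $g(0)=f'(0)$) in the first case, and then invokes Lemma~\ref{Lemma1} to decompose $g$ into signed parts; that is where the finite-number-of-zeros hypothesis and the auxiliary class $\mathcal{B}^M$ enter. You instead peel off $f(0)$ once and for all via the Hadamard-type identity $f(y)=f(0)+y\int_0^1 f'(sy)\,ds$, and then split $h=\int_0^1 f'(sy)\,ds$ by the explicit smooth device $h^{\pm}=\tfrac12\bigl(h\pm\sqrt{h^2+1}\bigr)$ (with the appropriate signs), bypassing Lemma~\ref{Lemma1} entirely. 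Your route has two concrete advantages: it makes explicit the $C^1$ regularity of the factor multiplying $y$, which the paper asserts without justification when it sets $g(0)=f'(0)$, and it shows that hypothesis (ii) is not actually needed for this representation theorem, since the $\sqrt{h^2+1}$ splitting works for an arbitrary $C^1$ function $h$ regardless of its zero set. What the paper's route buys in exchange is flexibility: the decomposition through $\mathcal{B}^M$ leaves a free choice of the auxiliary function $g$, which feeds into the later discussion of non-uniqueness and of optimizing the representation, whereas your construction produces one canonical (and everywhere strictly negative) $f^-$. Both arguments use $f\in C^2$ in the same single place, to get $f^{\pm}\in C^1$.
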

\begin{proof}
The proof includes two steps as follows.\\
\textbf{Step 1.} Consider the case when $f(0) = 0$. We define
\begin{equation*}
g(y) := 
\begin{cases}
&\dfrac{f(y)}{y}, \qquad y > 0,\\
&f'(0), \qquad y = 0.
\end{cases}
\end{equation*}
It is clear that $f(y) = yg(y)$ and $g \in C^{1}(\mathbb{R}^+)$.
From Lemma \ref{Lemma1}, the function $g$ admits a representation
\begin{equation*}
g(y) = g^+(y) + g^-(y), \quad g^+, g^- \in C^{1}(\mathbb{R}^+), \quad g^+ \in \mathcal{F}^+, \quad g^- \in \mathcal{F}^-.
\end{equation*}
Consequently,
\begin{equation*}
f(y) = f^+(y) + yf^-(y), \quad f^+(y) := yg^+(y),\quad f^-(y) = g^-(y).
\end{equation*}
\textbf{Step 2:} If $f(0) > 0$, we set $g(y) = f(y) - f(0)$. Then, $g(0) = 0$. From Step 1 we have that $g$ has a representation
\begin{equation*}
g(y) = g^+(y) + yg^-(y), \quad g^+, g^- \in C^{1}(\mathbb{R}^+), \quad g^+ \in \mathcal{F}^+, \quad g^- \in \mathcal{F}^-.
\end{equation*} 
Hence,
\begin{equation*}
f(y) = g(y) + f(0) = \big[f(0) + g^+(y)\big] +  yg^-(y),
\end{equation*}
where $f(0) + g^+(y) \in \mathcal{F}^+$ and $g^-(y) \in \mathcal{F}^-$. The proof is completed by combining Step 1 and Step 2.
\end{proof}
{
\begin{remark}
Similarly to Remark \ref{remark1a}, the proof of Theorem \ref{theorem1} also provides a way to determine representations of the function $f$. In many cases, it is easy to find representations in the form \eqref{eq:01} for a given function $f$, for instance
\begin{equation*}
\begin{split}
&y - y^2 = y + y(-y) = (y + y^2) + 2y(-y) = (y + ye^y) + y(-y - e^y),\\
&e^y - 1 = (e^y + y - 1) + y(-1) = (e^y + y^2 - 1) + y(-y),\\
&\sin(y) = \big[\sin(y) + y\big] + y(-1) = \big[\sin(y) + ye^y\big] + y(-e^y) .
\end{split}
\end{equation*}
Hence, the representations in the form \eqref{eq:01} of $f$ may not be unique. This leads to an optimal problem for the representation of the function $f$.
\end{remark}
}
\begin{remark}
In \cite{Cresson}, a representation theorem for real-valued functions  was constructed (see \cite[Theorem 10]{Cresson}); however, this theorem did not affirm the differentiability of negative and positive parts $f^+$ and $f^-$. It only proved that $f^+$ and $f^-$ are continuous. Actually, as will be seen in Lemma \ref{Maintheorem2}, the differentiability  of $f^+$ and $f^-$ is very essential in the construction of the elementary stable NSFD method.
\end{remark}
\subsection{Main results}
Throughout this paper, we always assume that:
\begin{enumerate}
\item[(A1)] The right-hand side function $f$ of Eq. \eqref{eq:1} satisfies $f(0) \geq 0$. This condition implies that $y(t) \geq 0$ for all $t \geq 0$ whenever $y_0 \geq 0$.
\item[(A2)]  The equation \eqref{eq:1} has a finite number of equilibria and each of which is hyperbolic, i.e., $f'(y_*) \ne 0$ for any equilibrium point $y_*$ of \eqref{eq:1}. This condition means that the LAS of equilibria of \eqref{eq:1} can be investigated by the Lyapunov indirect method (see \cite{Allen, Stuart}).
\end{enumerate}
Our main objective is to construction an NSFD method preserving simultaneously the positivity and LAS of Eq. \eqref{eq:1}.  First, we utilize Theorem \ref{theorem1} to transform the right-hand side function $f$ of Eq. \eqref{eq:1} to the form:
\begin{equation}\label{eq:2}
f(y) = f^+(y) + yf^-(y), \quad f^+, f^- \in C^1(\mathbb{R}^+), \quad f^+ \in \mathcal{F}^+, \quad f^- \in \mathcal{F}^-.
\end{equation}
Then we propose the following NSFD scheme
\begin{equation}\label{eq:3}
\dfrac{y_{n + 1} - y_n}{\varphi(h, y_n)} = f^+(y_n) + \alpha y_{n}f^-(y_n) + \beta y_{n + 1}f^-(y_n),
\end{equation}
where $\varphi: \mathbb{R}^+ \times \mathbb{R}^+ \to \mathbb{R}^+$ satisfies
\begin{equation}\label{eq:4}\
\varphi(h, y) = h + \mathcal{O}(h^2)  \quad \mbox{as} \quad h \to 0,
\end{equation}
and $\alpha, \beta$ are real numbers satisfying 
\begin{equation}\label{eq:5}
\alpha + \beta = 1, \quad \alpha \leq 0, \quad \beta \geq 0.
\end{equation}
For convenience, the variables in the function $\varphi(h, y)$ will be omitted in some places; Furthermore, throughout the rest of this section, we always assume that the conditions \eqref{eq:4} and \eqref{eq:5} are satisfied.
{
\begin{remark}
\begin{enumerate}[(i)]
\item The denominator function $\varphi$ of the NSFD method \eqref{eq:3} depends not only on  $h$ but also on $y_n$ to guarantee that it is accurate of order $2$ (see Lemma \ref{Maintheorem3}). Furthermore, it is determined by the correct asymptotic behaviour modelling.
\item Differently from most of NSFD schemes introduced in \cite{Mickens1, Mickens2, Mickens3, Mickens4, Mickens5}, the NSFD scheme \eqref{eq:3} uses a non-local approximation with the appearance of the parameters $\alpha$ and $\beta$ as weights. These parameters not only ensure the positivity (see Lemma \ref{Maintheorem1}) but also make the method more flexible. As will be seen in Subsection \ref{subsec4.1}, the NSFD method \eqref{eq:3} can become an exact scheme with suitable parameters $\alpha$ and $\beta$.
\end{enumerate}
\end{remark}
}

\begin{lemma}[Positivity of the NSFD method]\label{Maintheorem1}
Let $y_0 \geq 0$ be any initial data for Eq. \eqref{eq:1} and $\{y_n\}_{n \geq 1}$ be approximations generated by the NSFD method \eqref{eq:3}. Then we have $y_n \geq 0$ for all $n \geq 1$. In other words, the NSFD method \eqref{eq:3} preserves the positivity of Eq. \eqref{eq:1} for all finite step sizes $h > 0$.
\end{lemma}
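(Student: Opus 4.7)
The plan is to solve the implicit scheme \eqref{eq:3} explicitly for $y_{n+1}$ and then verify that both the numerator and denominator of the resulting expression have the correct signs, using the sign restrictions on $f^+$, $f^-$, $\alpha$, $\beta$ and $\varphi$. An induction on $n$ then completes the argument.

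First I would rearrange \eqref{eq:3} to isolate $y_{n+1}$. Multiplying by $\varphi(h,y_n)$ and collecting all $y_{n+1}$-terms on the left yields
\begin{equation*}
\bigl(1 - \beta\,\varphi(h,y_n)\,f^-(y_n)\bigr)\,y_{n+1} \;=\; y_n + \varphi(h,y_n)\,f^+(y_n) + \alpha\,\varphi(h,y_n)\,y_n\,f^-(y_n),
\end{equation*}
so that, provided the bracket on the left is nonzero,
\begin{equation*}
y_{n+1} \;=\; \frac{\,y_n + \varphi(h,y_n)\,f^+(y_n) + \alpha\,\varphi(h,y_n)\,y_n\,f^-(y_n)\,}{\,1 - \beta\,\varphi(h,y_n)\,f^-(y_n)\,}.
\end{equation*}

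Next I would check signs assuming the inductive hypothesis $y_n \geq 0$. For the denominator, $\beta \geq 0$, $\varphi(h,y_n) > 0$, and $f^-(y_n) \leq 0$ imply $-\beta\,\varphi\,f^-(y_n) \geq 0$, hence $1 - \beta\,\varphi\,f^-(y_n) \geq 1 > 0$; this also shows the scheme is well-defined (no division by zero). For the numerator, $y_n \geq 0$, $\varphi\,f^+(y_n) \geq 0$ because $f^+ \in \mathcal{F}^+$, and the last term $\alpha\,\varphi\,y_n\,f^-(y_n)$ is a product of $\alpha \leq 0$, $\varphi > 0$, $y_n \geq 0$, $f^-(y_n) \leq 0$, so it is nonnegative. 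Thus the numerator is $\geq 0$, and $y_{n+1} \geq 0$ follows. Starting from $y_0 \geq 0$, induction yields $y_n \geq 0$ for every $n \geq 1$, with no restriction on $h > 0$.

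There is really no obstacle here; the argument reduces to bookkeeping of signs. The key design choices that make it work are (a) splitting $f$ via Theorem \ref{theorem1} so that the negative part is factored as $y f^-(y)$, which permits a convex combination with weights $\alpha,\beta$ at levels $y_n$ and $y_{n+1}$, (b) imposing $\alpha \leq 0 \leq \beta$ so that each surviving term in the numerator is nonnegative and the denominator stays bounded away from zero, and (c) the nonnegativity of $\varphi$ guaranteed by \eqref{eq:4}. The only subtle point worth flagging is the well-posedness of the implicit step, but the same sign analysis that handles positivity immediately shows the denominator is at least $1$, so the explicit formula above is valid for every $h>0$.
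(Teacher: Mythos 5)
Your proposal is correct and follows essentially the same route as the paper: both rewrite the implicit scheme in the explicit form $y_{n+1} = \bigl(y_n + \varphi f^+(y_n) + \alpha\varphi y_n f^-(y_n)\bigr)/\bigl(1 - \beta\varphi f^-(y_n)\bigr)$ and conclude by sign bookkeeping and induction. Your additional observation that the denominator is bounded below by $1$, so the implicit step is always well-posed, is a small but worthwhile refinement the paper leaves implicit.
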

\begin{proof}
The theorem is proved by mathematical induction. We first transform the scheme \eqref{eq:3} to the explicit form
\begin{equation}\label{eq:6}
y_{n + 1} = \dfrac{y_n + \varphi f^+(y_n) + \varphi \alpha y_n f^-(y_n)}{1 - \varphi \beta f^-(y_n)}
\end{equation}
Assume that $y_n \geq 0$. From
\begin{equation*}
f^+(y_n) \geq 0, \quad f^-(y_n) \leq 0, \quad \alpha \leq 0, \quad \beta \geq 0, \quad \varphi \geq 0,
\end{equation*}
we deduce that $y_{n + 1} \geq 0$. The proof is complete.
\end{proof}
To determine the Jacobian of the scheme \eqref{eq:3} at its equilibria conveniently, we rewrite the scheme \eqref{eq:6} in the form
\begin{equation}\label{eq:7}
y_{n + 1} = y_n + \dfrac{\varphi f(y_n)}{1 - \varphi \beta f^-(y_n)}.
\end{equation} 
Assume that the set of equilibria of Eq. \eqref{eq:1} is $\Omega = \{y_1^*, y_2^*, \ldots, y_m^*\}$. We denote by $\Omega^s$ is the set of locally asymptotically stable equilibria of Eq. \eqref{eq:1}, that is
\begin{equation*}\label{eq:8}
\Omega^s = \{y^* \in \Omega|f'(y^*) < 0\}.
\end{equation*}
\begin{lemma}[Stability of the NSFD method]\label{Maintheorem2}
Assume that the function $\varphi(h, y)$ satisfies
\begin{equation}\label{eq:9}
\varphi(h, y^*) < \dfrac{2}{2\beta f^-(y^*) - f'(y^*)},  \qquad \forall y^* \in \Omega^s.
\end{equation}
Then the NSFD method \eqref{eq:3} is elementary stable.
\end{lemma}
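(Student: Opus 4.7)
The plan is to apply the Lyapunov indirect method (cf.\ assumption (A2)) to the explicit one-step map
\[
g(y) \;:=\; y + \frac{\varphi(h,y)\,f(y)}{1 - \varphi(h,y)\,\beta\, f^{-}(y)}
\]
arising from \eqref{eq:7}, and to check the two requirements in the definition of elementary stability: coincidence of fixed points with equilibria, and matching of local stability.

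First I would verify the fixed-point part. If $y^{*}$ is a fixed point of $g$, then $\varphi(h,y^{*})f(y^{*})=0$; since $\varphi>0$ this forces $f(y^{*})=0$, so fixed points of the scheme are exactly the equilibria of \eqref{eq:1}. Next I would differentiate $g$. Using the representation \eqref{eq:2} and the $C^{1}$ regularity of $f^{+}$ and $f^{-}$ provided by Theorem \ref{theorem1} (this is where the differentiability asserted there is essential; continuity would not suffice), together with $f(y^{*})=0$, every term containing $f$ as a factor vanishes at $y^{*}$, and the quotient rule collapses to
\[
g'(y^{*}) \;=\; 1 + \frac{\varphi(h,y^{*})\,f'(y^{*})}{1 - \varphi(h,y^{*})\,\beta\, f^{-}(y^{*})}.
\]
Note that the denominator is strictly positive, because $\varphi\ge 0$, $\beta\ge 0$ and $f^{-}\le 0$ imply $1-\varphi\beta f^{-}(y^{*})\ge 1$.

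With this expression in hand, the stability analysis splits into two cases according to the sign of $f'(y^{*})$, exactly mirroring the Lyapunov indirect criterion for \eqref{eq:1}. If $y^{*}\in\Omega^{s}$, i.e.\ $f'(y^{*})<0$, then the added fraction is negative, so $g'(y^{*})<1$ automatically, and the bound $g'(y^{*})>-1$ is equivalent, after clearing the positive denominator, to
\[
\varphi(h,y^{*})\bigl[\,2\beta f^{-}(y^{*}) - f'(y^{*})\,\bigr] \;<\; 2,
\]
which is precisely condition \eqref{eq:9}. Hence $|g'(y^{*})|<1$, and $y^{*}$ is locally asymptotically stable for \eqref{eq:3}. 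Conversely, if $y^{*}\notin\Omega^{s}$, then by assumption (A2) we have $f'(y^{*})>0$, the fraction is strictly positive, and therefore $g'(y^{*})>1$, so $y^{*}$ is unstable for \eqref{eq:3}. The two cases together yield elementary stability.

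The main obstacle I anticipate is twofold. First, one must handle the degenerate sign situation in the denominator $2\beta f^{-}(y^{*})-f'(y^{*})$: when this quantity is non-positive the stability inequality $g'(y^{*})>-1$ actually holds for every $\varphi>0$, so the hypothesis \eqref{eq:9} should be interpreted as automatically satisfied in that regime (a short remark will suffice). Second, care is needed in differentiating $g$ because $\varphi$ depends on $y$ as well as $h$; the saving grace is that the $\partial_{y}\varphi$ contributions all multiply $f(y^{*})=0$ and vanish, so the final formula for $g'(y^{*})$ is independent of $\varphi_{y}$, and the argument goes through cleanly.
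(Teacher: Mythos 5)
Your proposal is correct and follows essentially the same route as the paper: pass to the explicit form \eqref{eq:7}, verify fixed points coincide with equilibria, compute the Jacobian $J(y^*)=1+\varphi f'(y^*)/(1-\varphi\beta f^-(y^*))$, and split into the cases $f'(y^*)>0$ and $f'(y^*)<0$, with \eqref{eq:9} delivering $J(y^*)>-1$ in the stable case. You are in fact slightly more careful than the paper, which does not explicitly note that the $\partial_y\varphi$ terms vanish at $y^*$ because they multiply $f(y^*)=0$, nor that the denominator satisfies $1-\varphi\beta f^-(y^*)\ge 1$.
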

\begin{proof}
It is easy to verify that equilibria of \eqref{eq:1} and \eqref{eq:7} are identical. Let $y^*$ be any equilibrium point of \eqref{eq:1}. The Jacobian of \eqref{eq:7}  at $y^*$ is given by
\begin{equation*}
J(y^*) = 1 + \dfrac{\varphi(h, y^*)f'(y^*)}{1 - \varphi(h, y^*)\beta f^-(y^*)}
\end{equation*}
We consider two following cases:\\
(i) If $f'(y^*) > 0$, then $y^*$ is an unstable equilibrium point of \eqref{eq:1}. We need to prove that $y^*$ is also an unstable equilibrium point of \eqref{eq:7}. Indeed, since
\begin{equation*}
J(y^*) = 1 + \dfrac{\varphi(h, y^*)f'(y^*)}{1 - \varphi(h, y^*)\beta f^-(y^*)} > 1,
\end{equation*}
$y^*$ is an unstable equilibrium point of \eqref{eq:7}.\\
(ii)  If $f'(y^*) < 0$, then $y^*$ is a locally asymptotically stable equilirium point of \eqref{eq:1}. We will prove that  $y^*$ is a locally asymptotically stable equilirium point of \eqref{eq:7}. Indeed, we have
\begin{equation*}
J(y^*) = 1 + \dfrac{\varphi(h, y^*)f'(y^*)}{1 - \varphi(h, y^*)\beta f^-(y^*)} < 1.
\end{equation*}
On the other hand, $J(y^*) > -1$ if and only if
\begin{equation*}
2 > \varphi(h, y^*)\big[2 \beta f^-(y^*) - f'(y^*)\big].
\end{equation*}
This inequality is satisfied if \eqref{eq:9} holds. Hence, if \eqref{eq:9} then $|J(y^*)| < 1$, or equivalently, $y^*$ is a stable equilibrium point of \eqref{eq:7}.\\
The proof is completed.
\end{proof}
\begin{lemma}[The second-order NSFD method]\label{Maintheorem3}
Let $\varphi(h, y)$ be a function satisfying
\begin{equation}\label{eq:10}
\dfrac{\partial^2 \varphi}{\partial h^2}(0, y) = f'(y) - 2\beta f^-(y), \quad \forall y \geq 0.
\end{equation} 
Then the NSFD method \eqref{eq:3} is accurate of order $2$.
\end{lemma}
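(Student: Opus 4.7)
The plan is to establish order-two consistency by comparing the Taylor expansion in $h$ of the numerical increment produced by \eqref{eq:7} with the Taylor expansion of the exact increment, both under the assumption $y_n = y(t_n)$, and to show that the local truncation error is $\mathcal{O}(h^3)$.

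First I would expand the denominator function. Since $\varphi(h, y) = h + \mathcal{O}(h^2)$ as $h \to 0$, we have $\varphi(0, y) = 0$ and $\partial \varphi/\partial h(0, y) = 1$, so
\begin{equation*}
\varphi(h, y) = h + \frac{h^2}{2}\frac{\partial^2 \varphi}{\partial h^2}(0, y) + \mathcal{O}(h^3).
\end{equation*}
Using the rewritten form \eqref{eq:7}, an abbreviation $B := \beta f^-(y_n)$, and the geometric series expansion $(1 - \varphi B)^{-1} = 1 + \varphi B + \mathcal{O}(\varphi^2)$, I would obtain
\begin{equation*}
y_{n+1} - y_n = \varphi f(y_n) + \varphi^2 B f(y_n) + \mathcal{O}(h^3) = h f(y_n) + \frac{h^2}{2}\Bigl(\frac{\partial^2 \varphi}{\partial h^2}(0, y_n) + 2\beta f^-(y_n)\Bigr) f(y_n) + \mathcal{O}(h^3).
\end{equation*}

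Second, I would recall the standard Taylor expansion of the exact solution, using $y'(t_n) = f(y(t_n))$ and $y''(t_n) = f'(y(t_n)) f(y(t_n))$:
\begin{equation*}
y(t_n + h) - y(t_n) = h f(y(t_n)) + \frac{h^2}{2} f'(y(t_n)) f(y(t_n)) + \mathcal{O}(h^3).
\end{equation*}
Subtracting the two expressions (with $y_n = y(t_n)$), the $\mathcal{O}(h)$ terms cancel automatically, while the $\mathcal{O}(h^2)$ terms cancel precisely under the hypothesis \eqref{eq:10}, i.e.\ $\partial^2 \varphi/\partial h^2(0, y) = f'(y) - 2\beta f^-(y)$. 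Hence the local truncation error is $\mathcal{O}(h^3)$, which by the standard one-step convergence argument yields global accuracy of order $2$.

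There is no substantial obstacle beyond bookkeeping: the only delicate points are (i) verifying that $\varphi^3 f(y_n)$ and higher-order remainders from the geometric series are uniformly $\mathcal{O}(h^3)$ on bounded $y$-intervals, which uses that $f, f^- \in C^1(\mathbb{R}^+)$ so $\beta f^-(y_n)$ is bounded wherever $y_n$ stays bounded, and (ii) justifying the Taylor expansion of $\varphi$ to second order in $h$, which implicitly requires $\varphi$ to be $C^2$ in $h$ at $h=0$; this is a mild smoothness assumption on the chosen denominator function and is consistent with the way $\varphi$ is constructed in the examples of the paper. Once these regularity points are noted, the proof reduces to matching the two explicit second-order expansions above.
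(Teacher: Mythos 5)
Your proposal is correct and follows essentially the same route as the paper: Taylor-expand the one-step map \eqref{eq:7} and the exact solution in $h$, then match the coefficients of $h^2$, which is exactly what hypothesis \eqref{eq:10} arranges. Your expansion is in fact slightly more careful than the paper's, since you correctly retain the factor $f(y_n)$ in the $h^2$ coefficient of the numerical increment (the paper's displayed formula for $\partial^2 G/\partial h^2(0,y)$ omits this factor, evidently a typo that does not affect the conclusion), and you rightly flag the implicit requirement that $\varphi$ be twice differentiable in $h$ at $h=0$.
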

\begin{proof}
Using the Taylor expansion for $y(t)$ we obtain
\begin{equation}\label{eq:11}
y(t_{n + 1}) = y(t_n + h) = y(t_n) + hy'(t_n) + \dfrac{h^2}{2}y''(t_n) + \mathcal{O}(h^3).
\end{equation}
Let us denote
\begin{equation*}
G(h, y) = y + \dfrac{\varphi(h, y) f(y)}{1 - \varphi(h, y) \beta f^-(y)}.
\end{equation*}
It is easy to verify that
\begin{equation*}
G(0, y) = y, \quad \dfrac{\partial G}{\partial h}(0, y) = f(y),\quad \dfrac{\partial^2G}{\partial h^2}(0, y) = \dfrac{\partial^2 \varphi}{\partial h^2}(0, y) + 2\beta f^-(y).
\end{equation*}
Consequently,
\begin{equation}\label{eq:12}
\begin{split}
y(t_n) + \dfrac{\varphi f(y(t_n))}{1 - \varphi \beta f^-(y(t_n))} &= G(0, y(t_n)) + h\dfrac{\partial G}{\partial h}(0, y(t_n)) + \dfrac{h^2}{2}\dfrac{\partial^2G}{\partial h^2}(0, y(t_n)) + \mathcal{O}(h^3)\\
&= y(t_n) + hf(y(t_n)) + \dfrac{h^2}{2}\bigg[ \dfrac{\partial^2 \varphi}{\partial h^2}(0, y(t_n)) + 2\beta f^-(y(t_n))\bigg] + \mathcal{O}(h^3).
\end{split}
\end{equation}
From \eqref{eq:11} and \eqref{eq:12} we deduce that if \eqref{eq:10} holds then
\begin{equation*}
y(t_{n + 1}) - \bigg[y(t_n) + \dfrac{\varphi f(y(t_n))}{1 - \varphi \beta f^-(y(t_n))}\bigg] = \mathcal{O}(h^3),
\end{equation*}
which means that the NSFD scheme is accurate of order 2.
\end{proof}
From Lemmas \ref{Maintheorem1}, \ref{Maintheorem2} and \ref{Maintheorem3}, we obtain
\begin{theorem}\label{Maintheorem}
The NSFD method \eqref{eq:3} is positive, elementary stable and accurate of order $2$ if
\begin{equation}\label{eq:16}
\begin{split}
(H1) \qquad &\varphi(h, y) = h + \mathcal{O}(h^2)  \quad \mbox{as} \quad h \to 0, \quad \varphi(h, y) \geq 0 \quad \text{for all} \quad (h, y) \in \mathbb{R}^+ \times \mathbb{R}^+,\\
(H2) \qquad &\varphi(h, y^*) < \dfrac{2}{2\beta f^-(y^*) - f'(y^*)},  \qquad \forall y^* \in \Omega^s,\\
(H3) \qquad &\dfrac{\partial^2 \varphi}{\partial h^2}(0, y) = f'(y) - 2\beta f^-(y), \quad \forall y \geq 0.\\
(H4) \qquad &\alpha + \beta = 1, \quad \alpha \leq 0, \quad \beta \geq 0.
\end{split}
\end{equation} 
\end{theorem}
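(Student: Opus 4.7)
The plan is to obtain Theorem \ref{Maintheorem} as a direct synthesis of the three preceding lemmas, checking that conditions (H1)--(H4) supply exactly the hypotheses each lemma requires. The strategy is straightforward because the heavy lifting has already been done: positivity comes from Lemma \ref{Maintheorem1}, elementary stability from Lemma \ref{Maintheorem2}, and second-order accuracy from Lemma \ref{Maintheorem3}. What remains is purely a bookkeeping exercise of matching hypotheses to conclusions.

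First I would argue the positivity assertion. The NSFD scheme \eqref{eq:3} has the underlying sign requirements $\alpha \le 0$, $\beta \ge 0$, $\alpha + \beta = 1$ on the weights, together with $\varphi \ge 0$; these are supplied by (H4) and the second half of (H1). With (A1) giving $f(0)\ge 0$, Theorem \ref{theorem1} provides the decomposition \eqref{eq:2} used in \eqref{eq:3}. Thus all hypotheses of Lemma \ref{Maintheorem1} are in force, so $y_n\ge 0$ for every $n\ge 1$ whenever $y_0\ge 0$.

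Next I would verify elementary stability. Lemma \ref{Maintheorem2} requires precisely \eqref{eq:9}, which is condition (H2); the standing assumption (A2) (hyperbolic equilibria) ensures that the Lyapunov indirect method is applicable, exactly as in the proof of Lemma \ref{Maintheorem2}. Hence the fixed points of \eqref{eq:3} coincide with the equilibria of \eqref{eq:1}, and their local stability types match. Finally, for the order-of-accuracy claim, condition (H3) is exactly the second-derivative identity \eqref{eq:10} required by Lemma \ref{Maintheorem3}; combined with the consistency requirement $\varphi(h,y)=h+\mathcal{O}(h^2)$ from the first half of (H1), this yields a local truncation error of order $\mathcal{O}(h^3)$, i.e., a second-order method.

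Putting the three pieces together completes the proof. The main obstacle, such as it is, is not mathematical but expository: one must be careful that the hypotheses on $\varphi$ in (H1)--(H3) are mutually compatible (so that a function $\varphi$ satisfying all three actually exists), but this is an existence question already implicit in Lemma \ref{Maintheorem3} and explicitly addressed in the construction of $\varphi$ elsewhere in the paper. Within the scope of this theorem itself, no new estimates are needed; the statement is simply the conjunction of the three lemmas under a common set of hypotheses.
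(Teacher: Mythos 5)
Your proposal is correct and matches the paper's approach exactly: the theorem is obtained in the paper as an immediate corollary of Lemmas \ref{Maintheorem1}, \ref{Maintheorem2} and \ref{Maintheorem3}, with (H1) and (H4) supplying positivity, (H2) supplying elementary stability, and (H3) together with (H1) supplying second-order accuracy. Your bookkeeping of hypotheses to lemmas is precisely what the paper intends.
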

Next, we introduce a function $\varphi(h)$ satisfying the conditions $(H1)-(H3)$. Let us denote 
\begin{equation*}
\lambda(y) := -f'(y) + 2\beta f^-(y),
\end{equation*}
\begin{equation}\label{eq:17}
\varphi(h, y) =
\begin{cases}
\dfrac{1 - e^{-h\lambda(y)}}{\lambda(y)}, \quad &\lambda(y) \ne 0,\\
h, \qquad  &\lambda(y) = 0.
\end{cases}
\end{equation}
\begin{proposition}\label{Proposition1}
The function $\varphi(h, y)$ given by \eqref{eq:17} satisfies the conditions $(H1)-(H3)$ in Theorem \ref{Maintheorem}.
\end{proposition}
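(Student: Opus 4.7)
The plan is to verify the three conditions (H1)--(H3) by direct substitution into the explicit formula for $\varphi$, splitting the analysis into the cases $\lambda(y)=0$ and $\lambda(y)\ne 0$. The observation that streamlines everything is that, by definition of $\lambda$, we have $-\lambda(y)=f'(y)-2\beta f^-(y)$, so the right-hand side of (H3) is exactly $-\lambda(y)$ and the bound in (H2) reads $2/\lambda(y^*)$.

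For (H1), when $\lambda(y)\ne 0$ I would Taylor-expand
\[
e^{-h\lambda(y)} = 1 - h\lambda(y) + \tfrac{1}{2}h^{2}\lambda(y)^{2} + \mathcal{O}(h^{3})
\]
and divide by $\lambda(y)$ to get $\varphi(h,y) = h - \tfrac{1}{2}h^{2}\lambda(y) + \mathcal{O}(h^{3}) = h + \mathcal{O}(h^{2})$; the case $\lambda(y)=0$ is immediate since $\varphi=h$. Non-negativity then follows from a sign check: if $\lambda(y)>0$, both the numerator $1-e^{-h\lambda(y)}$ and denominator are non-negative, while if $\lambda(y)<0$, both are non-positive, so $\varphi(h,y)\ge 0$ in every case.

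For (H3), in the case $\lambda(y)\ne 0$ I would differentiate directly: $\partial_{h}\varphi = e^{-h\lambda(y)}$ and $\partial_{h}^{2}\varphi = -\lambda(y)e^{-h\lambda(y)}$, so evaluating at $h=0$ yields $\partial_{h}^{2}\varphi(0,y) = -\lambda(y) = f'(y)-2\beta f^-(y)$. When $\lambda(y)=0$, $\varphi\equiv h$ has vanishing second derivative, and $-\lambda(y)=0$ matches trivially.

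For (H2), fix $y^{*}\in\Omega^{s}$, so that $f'(y^{*})<0$. The case of interest is $\lambda(y^{*})>0$, where the elementary inequality $1-e^{-x}<1$ for $x>0$ gives
\[
\varphi(h,y^{*}) = \frac{1-e^{-h\lambda(y^{*})}}{\lambda(y^{*})} < \frac{1}{\lambda(y^{*})} < \frac{2}{\lambda(y^{*})},
\]
which is precisely (H2). The only real obstacle is the sign of $\lambda(y^{*})$: since $2\beta f^-(y^{*})\le 0$ while $-f'(y^{*})>0$, the quantity $\lambda(y^{*})$ can a priori be non-positive, making the stated bound in (H2) vacuous. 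Unwinding the proof of Lemma \ref{Maintheorem2} shows that the effective stability requirement reduces to $\varphi(h,y^{*})\lambda(y^{*})<2$, which is automatic whenever $\lambda(y^{*})\le 0$ since $\varphi\ge 0$; hence the elementary stability conclusion of Lemma \ref{Maintheorem2} continues to hold for the proposed $\varphi$ in all sign regimes, and the proposition follows.
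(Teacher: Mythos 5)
Your verification of (H1) and (H3) is exactly the routine computation the paper leaves to the reader (``it is easy to verify''), and your chain of inequalities $\varphi(h,y^*) < 1/\lambda(y^*) < 2/\lambda(y^*)$ in the case $\lambda(y^*)>0$ is precisely the paper's argument for (H2). Where you diverge is in refusing to take the positivity of $\lambda(y^*)$ for granted: the paper's proof simply asserts ``since $\lambda(y^*)>0$ for all $y^*\in\Omega^s$'' and moves on, whereas you observe that $\lambda(y^*)=-f'(y^*)+2\beta f^-(y^*)$ is the sum of a positive and a non-positive term and so need not be positive. You are right to worry: for the paper's own scheme sNSFD1 for the logistic equation ($f^-(y)=-y$, $\beta=1.25$) one has $\lambda(2)=2-5=-3<0$ at the stable equilibrium $y^*=2$, so the paper's assertion fails even in its leading example. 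Your repair --- returning to the proof of Lemma \ref{Maintheorem2}, where the operative requirement is $\varphi(h,y^*)\bigl[2\beta f^-(y^*)-f'(y^*)\bigr]<2$, which is automatic when $\lambda(y^*)\le 0$ because $\varphi\ge 0$ --- is the correct one, and it shows that the elementary-stability conclusion survives in every sign regime. The one caveat is that what you have then established is not quite the proposition as literally stated: when $\lambda(y^*)<0$ the right-hand side of (H2) is negative and the inequality cannot hold for a non-negative $\varphi$, so strictly speaking (H2) should be reformulated as $\varphi(h,y^*)\lambda(y^*)<2$, under which form both the proposition and your argument (and, after correction, the paper's) are valid. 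In short, your proof is sound and is in fact more careful than the one in the paper.
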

\begin{proof}
It is easy to verify that $\varphi(h, y)$ satisfies $(H1)$ and $(H3)$. Since $\lambda(y^*) > 0$ for all $y^* \in \Omega^s$, we have
\begin{equation*}
\varphi(h, y^*) < \dfrac{1}{\lambda(y^*)} = \dfrac{1}{-f'(y^*) + 2\beta f^-(y^*)} < \dfrac{2}{-f'(y^*) + 2\beta f^-(y^*)}, 
\end{equation*}
which implies that $(H2)$ is also satisfied. The proof is completed.
\end{proof}
\begin{remark}
\begin{enumerate}[(i)]
\item Lemma 2 in \cite{Gupta} provides an important result for selecting the denominator functions satisfying the conditions $(H1)-(H3)$ in Theorem \ref{Maintheorem}.
\item  The convergence of the NSFD scheme \eqref{eq:3} can be established similarly to the results constructed in \cite[Section 6]{Cresson}.
\end{enumerate}
\end{remark}
{
\section{Some applications of the NSFD method}\label{sec4}
In this section, we present some numerical examples and applications of the NSFD method \eqref{eq:3}. {  In all the following examples, the denominator functions $\varphi$ for the NSFD method \eqref{eq:3} are determined by using Proposition \ref{Proposition1}.}
\subsection{The logistic differential equation}\label{subsec4.1}
We consider the well-known logistic equation (see \cite{Allen})
\begin{equation}\label{eq:18}
\dfrac{dy}{dt} = 2\bigg(y - \dfrac{y^2}{2}\bigg) := f(y), \quad 0 \leq t \leq 1, \quad y(0) = 0.5.
\end{equation}
The exact solution of this equation is given by 
\begin{equation*}
y(t) = \dfrac{1}{0.5 + 1.5e^{-2t}}.
\end{equation*}
The equation has two equilibria $y_0^* = 0$ (unstable) and $y_1^* = 2$ (stable).\par
{
From the NSFD method \eqref{eq:3}, we obtain the following second-order NSFD schemes for the Eq. \eqref{eq:18}, which correspond to three different representations of the right-hand side function $f(y) = 2y - y^2$ given in Table \ref{Table1new}:
\begin{table}[H]
\begin{center}
\caption{Second-order NSFD schemes for E.q \eqref{eq:18}.}\label{Table1new}
\begin{tabular}{ccccccccccccc}
\hline
Scheme&$f^+$&$f^-$&$\beta$&Denominator function&Remark\\
\hline
&&&&&&\\
sNSFD1&$2y$&$-y$&$1.25$&$\varphi(h, y) = \dfrac{1 - e^{(2 + 0.5y)h}}{2 + 0.5y}$&Second-order scheme\\
&&&&&&\\
sNSFD2&$2y + y^2$&$-2y$&$1.25$&$\quad \varphi(h, y) = \dfrac{1 - e^{-h(2 + 3y)}}{2 + 3y}$&Second-order scheme\\
&&&&&&\\
sNSFD3&$2y$&$-y$&$1$&$\varphi(h, y) = \dfrac{1 - e^{-2h}}{2}$&Exact NSFD scheme\\
&&&&&&\\
\hline
\end{tabular}
\end{center}
\end{table}
We now observe absolute errors and convergence rates of the schemes sNSFD1 and sNSFD2 at the end of the integration interval, namely, $T = 1$. Numerical results are presented in Table \ref{Tabl1}. In this table,
\begin{equation*}
error = |y(t_N) - y_N|, \quad rate =: log_{(h_1/h_2)}\bigg(\dfrac{error(h_1)}{error(h_2)}\bigg).
\end{equation*}
\begin{table}[H]
\caption{Absolute errors and rates of the NSFD schemes for Eq. \eqref{eq:18}.}\label{Tabl1}
\begin{center}
\begin{tabular}{cccccccc}
\hline
Step h&sNSFD1 error&rate&sNSFD2 error&rate&Wood \& Kojourharov's scheme \eqref{eq:19}&rate\\
\hline
$10^{-1}$&$0.0014$&&$0.0127$&&$0.0470$&\\
$10^{-2}$&$1.4678e-005$&$1.9795$&$1.3823e-004$&$1.9632$&$0.0045$&$1.0189$&\\
$10^{-3}$&$ 1.4749e-007$&$1.9979$&$1.3910e-006$&$1.9973$&$ 4.4841e-004$&$1.0015$&\\
$10^{-4}$&$1.4756e-009$&$1.9998$&$1.3918e-008$&$1.9998$&$4.4820e-005$&$1.0002 $&\\
$10^{-5}$&$1.4742e-011$&$2.0004$&$1.3917e-010$&$2.0000$&$ 4.4818e-006$&$1.0000$&\\
\hline
\end{tabular}
\end{center}
\end{table}
It is clear that the schemes sNSFD1 and sNSFD2 are accurate of order 2. However, the scheme sNSFD1 provides better errors than the scheme sNSFD2. This means that the representations of the function $f$ affect the errors of the NSFD schemes. This observation leads to an optimal problem for the representation of the function $f$.\par
It is important to note that in the case $\beta = 1$, the NSFD method \eqref{eq:3} is accurate of order $2$ for Eq. \eqref{eq:18} if
\begin{equation*}
\dfrac{\partial^2\varphi}{\partial h^2}(0, y) = f'(y) - 2\beta f^-(y) \equiv 2.
\end{equation*}
In this case, the function $\varphi(h) = (1 - e^{-2h})/{2}$ satisfies this condition. For this function, we do not need to update the value of the denominator function in each iterative step; consequently, computational volume and computation time are reduced. This show the role of the parameter $\beta$.  Actually, in the case when $\varphi(h) = (1 - e^{-2h})/{2}$ we obtain an \textit{exact scheme} for the logistic equation \cite{Mickens1, Mickens2, Mickens3, Mickens4, Mickens5}.\par
}
We now use compare the second NSFD schemes with positive and elementary stable NSFD (PSENSFD) method constructed by Wood and Kojouharov in \cite{Wood}. Note that the Wood and Kojourharov's scheme is given by
\begin{equation}\label{eq:19}
y_{n + 1}
=
\begin{cases}
&y_n + \phi(h)\big(2y_n - y_n^2\big), \quad \text{if} \quad 2y_n - y_n^2\geq 0,\\
&\dfrac{y_n^2}{y_n - \phi(h)\big(2y_n - y_n^2\big)}, \quad \text{if} \quad 2y_n - y_n^2 < 0,
\end{cases}
\end{equation}
where $\varphi(h)$ satisfies $\phi(h) < 1$ for all $h > 0$. Therefore, we take $\phi(h) = 1 - e^{-h}$. The errors and convergence rates of the scheme \eqref{eq:19} are also presented in Table \ref{Tabl1}. It is clear that it is only accurate of order $1$. However, it requires the computation of the sign of the right-hand side function at each iterative step. \par
%

To end this subsection, we consider dynamics of numerical solutions generated by the standard Euler scheme, the standard second-order Runge-Kutta (RK2) scheme and the NSFD method \eqref{eq:3} (scheme sNSFD1) with the step size $h = 1.25$. The numerical approximations are depicted in Figures \ref{Fig:1} and \ref{Fig:2}. It is easy to observe that the standard Euler scheme generated the numerical approximation oscillates around the equilibrium position, the approximation generated by the RK2 scheme converges to a spurious  equilibrium point; however, the NSFD scheme preserves the dynamics of the logistic equation. This result is completely consistent with recognized results on NSFD schemes for differential equations \cite{Anguelov, Kojouharov, Mickens1, Mickens2, Mickens3, Mickens3, Mickens4, Mickens5, Patidar1, Patidar2, Wood}. On the other hand, it is easy to observe that the NSFD method \eqref{eq:3} is more  accurate than the PESNSFD method.
\begin{figure}[H]
\centering
\includegraphics[height=10cm,width=15cm]{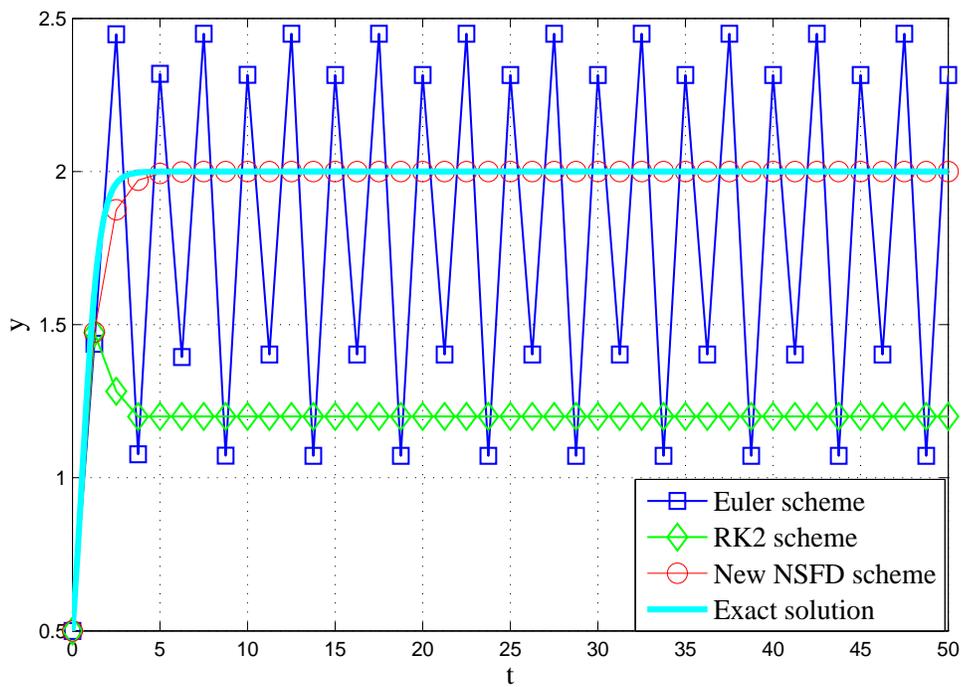}
\caption{The approximations generated by the Euler scheme, the RK2 scheme and the NSFD scheme with $h = 1.25$.}\label{Fig:1}
\end{figure}
\begin{figure}[H]
\centering
\includegraphics[height=10cm,width=15cm]{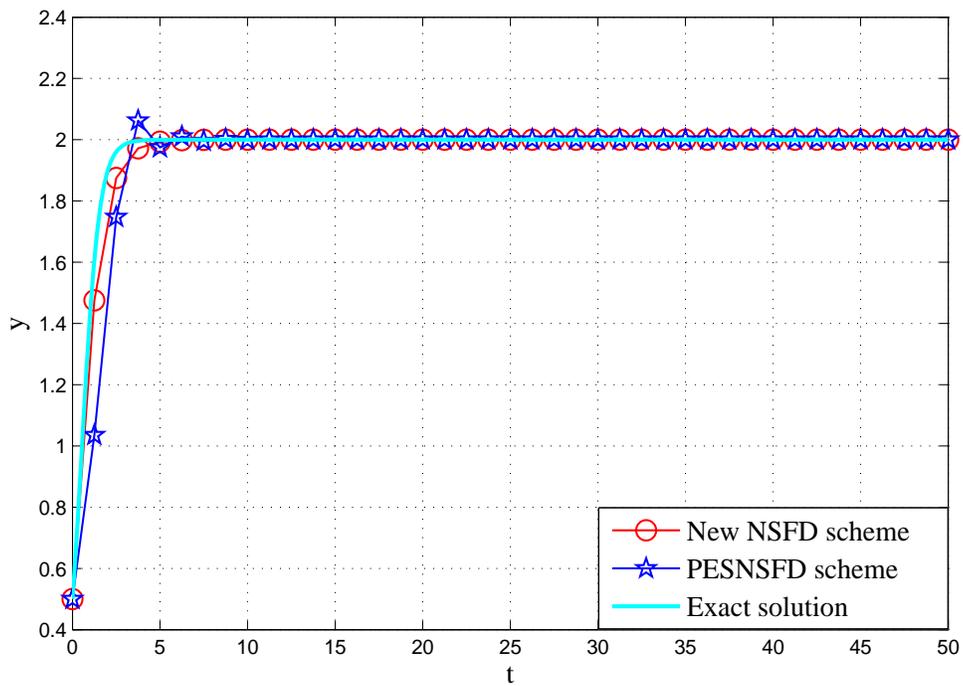}
\caption{The approximations generated by the new NSFD method and PENSFD scheme (scheme \eqref{eq:19}) with $h = 1.25$.}\label{Fig:2}
\end{figure}
\subsection{The cubic differential equation}
Consider the cubic differential equation
\begin{equation}\label{eq:20}
\dfrac{dy}{dt} = y(1 - y^2), \quad y(0) \geq 0. 
\end{equation}
In $\mathbb{R}^+$, the cubic equation \eqref{eq:20} has two equilibrium point $y^* = 0$ (unstable) and $y^* = 1$ (stable).\par
Now, Theorem \ref{Maintheorem} will be applied to obtain a positive and elementary stable NSFD scheme of second order for Eq. \eqref{eq:20}. For this purpose, we choose
\begin{equation*}
f^+(y) = y, \quad f^-(y) = y^2, \quad \beta = \dfrac{3}{2}, \quad \alpha = -\dfrac{1}{2}.
\end{equation*}
Then we obtain
\begin{equation*}
\dfrac{y_{n + 1} - y_n}{\varphi} = y_n + \dfrac{1}{2}(y_n)^3 - \dfrac{3}{2}(y_n)^2y_{n + 1},
\end{equation*}
or equivalently,
\begin{equation}\label{eq:21a}
y_{n + 1} = \dfrac{(2 + \varphi)y_n + \varphi (y_n)^3}{2 + 3\varphi (y_n)^2}.
\end{equation}
By Theorem \ref{Maintheorem}, the scheme \eqref{eq:21a} is positive, elementary stable and accurate of order $2$ if
\begin{equation*}
\dfrac{\partial^2 \varphi}{\partial h^2}(0, y) = f'(y) - 2\beta f^-(y) = 1.
\end{equation*}
For this condition, the function $1 - e^{-h}$ is most suitable; consequently, we obtain
\begin{equation}\label{eq:21b}
y_{n + 1} = \dfrac{(2 + \varphi)y_n + \varphi (y_n)^3}{2 + 3\varphi (y_n)^2}, \quad \varphi(h) = 1 - e^{-h}.
\end{equation}
In \cite{Mickens6}, Mickens proposed the following NSFD scheme that incorporates the maximum symmetry in modeling the nonlinear term:
\begin{equation}\label{eq:21}
y_{n + 1} = \Bigg[\dfrac{(2 + \phi) + \phi(y_n)^2}{(2 - \phi) + 3\phi(y_n)^2}\Bigg]y_n ,\quad \phi = \dfrac{1 - e^{-2h}}{2}.
\end{equation}
Clearly, the scheme \eqref{eq:21b} is different from \eqref{eq:21}. However, it is easy to show that the NSFD scheme \eqref{eq:21} is a particular case of the NSFD method \eqref{eq:3}. Indeed, for the NSFD scheme \eqref{eq:3} we choose
\begin{equation*}
f^+(y) = \dfrac{1}{2}y + (y)^3, \quad f^-(y) = \dfrac{1}{2} - \dfrac{3}{2}y^2, \quad \alpha = 0, \quad \beta = 1.
\end{equation*}
Note that in this form $f^- \notin \mathcal{F}^-$. Then the NSFD method \eqref{eq:3} for Eq. \eqref{eq:20} becomes
\begin{equation*}
\dfrac{y_{n + 1} - y_n}{\varphi(h)} = \dfrac{1}{2}y_n + \dfrac{1}{2} (y_n)^3 + \dfrac{1}{2}y_{n + 1} - \dfrac{3}{2}  (y_n)^2 y_{n + 1},
\end{equation*}
or equivalently,
\begin{equation}\label{eq:22}
y_{n + 1} = \Bigg[\dfrac{(2 + \varphi) + \varphi(y_n)^2}{(2 - \varphi) + 3\varphi(y_n)^2}\Bigg]y_n.
\end{equation}
It is clear that the scheme \eqref{eq:22} will coincident with \eqref{eq:21} if $\varphi = ({1 - e^{-2h})}/{2}$. Since $f^-(y) = (1/2) - (3/2)y^2 \notin \mathcal{F}^-$, the scheme \eqref{eq:22} is not positive for any function $\varphi(h) > 0$. A simple condition for the scheme to be positive is $\varphi(h) < 1/2$ for all $h > 0$. Fortunately, the function $\varphi = ({1 - e^{-2h})}/{2}$ satisfies this condition; therefore, the scheme \eqref{eq:21} is positive. Importantly, the function $\varphi = ({1 - e^{-2h})}/{2}$ satisfies Theorem \ref{Maintheorem}; consequently, the scheme \eqref{eq:21} is also of second order accuracy.\par
The results presented in this subsection and Subsection \ref{subsec4.1} can be generalized as follows.
\begin{proposition}
The ODE
\begin{equation}
\dfrac{dy}{dt} = ay - by^m, \quad a, b > 0,  \quad m \geq 2, \quad y(0) \geq 0
\end{equation}
has a positive and elementary stable NSFD scheme of second order accuracy defined by
\begin{equation}\label{eq:23a}
\dfrac{y_{n + 1} - y_n}{\varphi(h)} = ay_n - b\bigg(1 - \dfrac{m}{2}\bigg)(y_n)^m - b\dfrac{m}{2}{(y_n)}^{m-1}y_{n + 1}, \quad \varphi(h) = \dfrac{1 - e^{-ah}}{a}.
\end{equation}
\end{proposition}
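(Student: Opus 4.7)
The plan is to exhibit the scheme as a specialization of the general NSFD method \eqref{eq:3} and then apply Theorem \ref{Maintheorem}. First I would produce the decomposition \eqref{eq:2} explicitly: writing $f(y) = ay - by^m = (ay) + y(-by^{m-1})$, I take $f^+(y) = ay$ and $f^-(y) = -by^{m-1}$. Since $a, b > 0$ and $m \geq 2$, both lie in $C^1(\mathbb{R}^+)$ with $f^+ \in \mathcal{F}^+$ and $f^- \in \mathcal{F}^-$. Assumption (A1) holds as $f(0) = 0$; the zeros of $f(y) = y(a - by^{m-1})$ in $\mathbb{R}^+$ are $y_0^* = 0$ and $y_1^* = (a/b)^{1/(m-1)}$, and both are hyperbolic because $f'(0) = a > 0$ and $f'(y_1^*) = a(1-m) < 0$, which verifies (A2) and yields $\Omega^s = \{y_1^*\}$.

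Next I would read off the weights by comparing \eqref{eq:23a} with \eqref{eq:3}. Using $f^-(y_n) = -by_n^{m-1}$, the terms match when $\alpha = 1 - m/2$ and $\beta = m/2$; these satisfy (H4) since $\alpha + \beta = 1$, $\alpha \leq 0$ (from $m \geq 2$), and $\beta \geq 0$. Positivity follows immediately from Lemma \ref{Maintheorem1}. Condition (H1) for $\varphi(h) = (1 - e^{-ah})/a$ is routine from its Taylor expansion and nonnegativity on $h \geq 0$. For (H2), I would substitute $f^-(y_1^*) = -a$ and $f'(y_1^*) = a(1-m)$ to find $2\beta f^-(y_1^*) - f'(y_1^*) = -a < 0$; the negativity of this quantity makes the inequality in Lemma \ref{Maintheorem2} hold automatically for every positive $\varphi$, giving elementary stability.

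The main obstacle, and the only step with real content, is verifying (H3). The decisive observation is that
\begin{equation*}
f'(y) - 2\beta f^-(y) = (a - bm y^{m-1}) - 2(m/2)(-b y^{m-1}) = a,
\end{equation*}
with the $y^{m-1}$ terms cancelling exactly, forced precisely by the choice $\beta = m/2$. This reduces (H3) to the single scalar condition $\partial_{hh}^2 \varphi(0) = a$, which a Taylor expansion of the proposed $\varphi$ verifies. The pleasant consequence of the cancellation is that a denominator depending only on $h$, and not on $y$, is enough to secure second-order accuracy uniformly in $y$. Once (H3) is confirmed, Theorem \ref{Maintheorem} delivers positivity, elementary stability, and second-order accuracy in one package.
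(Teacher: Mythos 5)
Your overall strategy --- write $f(y)=ay+y(-by^{m-1})$, read off $\alpha=1-\tfrac m2$, $\beta=\tfrac m2$, check (H4), get positivity from Lemma~\ref{Maintheorem1}, and reduce everything to the hypotheses of Theorem~\ref{Maintheorem} --- is exactly the intended route, and your computations of the equilibria, of $f^-(y_1^*)=-a$ and $f'(y_1^*)=a(1-m)$, and of the cancellation $f'(y)-2\beta f^-(y)\equiv a$ are all correct. The proof breaks at the very last step, however. The stated denominator $\varphi(h)=\bigl(1-e^{-ah}\bigr)/a$ expands as $h-\tfrac a2h^2+\mathcal{O}(h^3)$, so $\tfrac{\partial^2\varphi}{\partial h^2}(0)=-a$, \emph{not} $+a$; condition (H3) therefore fails, and your assertion that ``a Taylor expansion of the proposed $\varphi$ verifies'' it is false. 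With this $\varphi$ the scheme \eqref{eq:23a} is only first-order accurate. The denominator that (H3) actually demands --- and the one produced by the paper's own Proposition~\ref{Proposition1}, since here $\lambda(y)=-f'(y)+2\beta f^-(y)\equiv -a<0$ --- is $\varphi(h)=\dfrac{1-e^{ah}}{-a}=\dfrac{e^{ah}-1}{a}=h+\tfrac a2h^2+\mathcal{O}(h^3)$. (A quick consistency check: for $m=2$ the scheme reads $\tfrac{y_{n+1}-y_n}{\varphi}=ay_n-by_ny_{n+1}$, whose \emph{exact} denominator is the classical $(e^{ah}-1)/a$; with $(1-e^{-ah})/a$ one does not reproduce the exact logistic solution.) So the verification cannot ``go through'' as claimed: a correct proof must either flip the sign in the exponent of the stated $\varphi$ or explicitly record that the statement's denominator function does not satisfy (H3).

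A smaller point concerns (H2). Since $2\beta f^-(y_1^*)-f'(y_1^*)=-a<0$, the right-hand side of \eqref{eq:9} equals $-2/a<0$, so the inequality \eqref{eq:9} \emph{as literally written} is violated by every positive $\varphi$; it cannot ``hold automatically.'' What saves elementary stability is the equivalent form appearing inside the proof of Lemma~\ref{Maintheorem2}: $|J(y^*)|<1$ amounts to $2>\varphi(h,y^*)\bigl[2\beta f^-(y^*)-f'(y^*)\bigr]$, which is trivially true when the bracket is negative. You should invoke that inequality (or the positivity of $\lambda(y^*)$ in the sense of Proposition~\ref{Proposition1} with the corrected sign) rather than \eqref{eq:9} itself.
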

\subsection{The  modified Monod equation}
In \cite{Mickens6}, Mickens considered the modified Monod equation
\begin{equation}\label{eq:23}
\dfrac{dy}{dt} = \dfrac{(\lambda - 1)y - (\lambda+1)y^2}{1 + y}, \quad \lambda > 1,
\end{equation}
and proposed the following NSFD scheme
\begin{equation}\label{eq:24}
\dfrac{y_{n + 1} - y_n}{\phi} = \Bigg[\dfrac{(\lambda - 1) - (1 + \lambda)y_{n + 1}}{1 + y_n}\Bigg]y_n, \quad \phi = \dfrac{1 - e^{-Rh}}{R}, \quad R = \lambda - 1.
\end{equation}
Clearly, the scheme \eqref{eq:24} is a particular case of the NSFD method \eqref{eq:4} with
\begin{equation*}
f^+(y) = \dfrac{(\lambda - 1)y}{1 + y}, \,\,  f^-(y) = \dfrac{(\lambda+1)y}{1 + y}, \,\, \beta = 1, \,\, \varphi(h, y) = \dfrac{1 - e^{-Rh}}{R}, \,\, R = \lambda - 1.
\end{equation*}
Although the scheme \eqref{eq:24} is positive and elementary stable, it is only of first order accuracy. However, thanks to Theorem \ref{Maintheorem} it is easy to determine conditions for this scheme to be accurate of order $2$. More clearly, the scheme \eqref{eq:24} is of second-order accuracy if
\begin{equation*}
\dfrac{\partial^2 \varphi}{\partial h^2}(0, y) = -\dfrac{(\lambda + 3) + 4(\lambda + 1)y + 3(\lambda + 1)y^2}{(1 + y)^2}.
\end{equation*}
A denominator function satisfying this above condition is
\begin{equation*}
\varphi(h, y) = \dfrac{1 - e^{-R(y)h}}{R(y)}, \quad R(y) = \dfrac{(\lambda + 1) + 4(\lambda + 1)y + 3(\lambda + 1)y^2}{(1 + y)^2}.
\end{equation*}
This function ensures that the scheme \eqref{eq:24} is of second order accuracy.
\subsection{The sine equation}
In \cite{Mickens6}, Mickens considered the since equation
\begin{equation}\label{eq:25a}
\dfrac{dy}{dt} = \sin(\pi y)
\end{equation}
and formulated the following NSFD scheme 
\begin{equation}\label{eq:25}
\dfrac{y_{n + 1} - y_n}{\phi} = sin(\pi y_n), \quad \phi = \dfrac{1 - e^{-\pi h}}{\pi}.
\end{equation}
It is easy to verify that the scheme \eqref{eq:25} is of first order accuracy. We now use Theorem \ref{Maintheorem} to obtain a second-order NSFD scheme for the sine equation. For this purpose, we rewrite Eq. \eqref{eq:25a} in the form
\begin{equation*}
\dfrac{dy}{dt} = \sin(\pi y) = f^+(y) + yf^-(y) = \big[sin(\pi y) + \pi y\big] + y(-\pi).
\end{equation*}
Note that $y + sin(y) \geq 0$ for all $y \geq 0$. Applying the NSFD method \eqref{eq:3} with $\beta = 1$ we obtain
\begin{equation}\label{eq:26}
y_{n + 1} = \dfrac{y_n + \varphi \big[sin(\pi y_n) + \pi y_n\big]}{1 + \varphi \pi}.
\end{equation}
The scheme is elementary stable and positive, and it is accurate of order $2$ if
\begin{equation*}
\dfrac{\partial^2 \varphi}{\partial h^2}(0, y) = \pi\cos(\pi y) - 2\pi.
\end{equation*}
Hence, the function
\begin{equation*}
\varphi(h, y) = \dfrac{1 - e^{-\lambda(y)h}}{\lambda(y)}, \quad \lambda(y) = \pi\cos(\pi y) - 2\pi
\end{equation*}
guarantees that the NSFD scheme \eqref{eq:26} is positive, elementary stable and accurate of order $2$.
{
\subsection{An extension of the NSFD method for systems of ODEs}
In this subsection, we generalize the NSFD method \eqref{eq:3} for systems of ODEs. For this purpose, we first consider the following general class of two-dimensional differential equations including several models in population dynamics
\begin{equation}\label{eq:29}
\begin{split}
&\dfrac{dx}{dt} = x\big(f_+(x, y) - f_-(x, y)\big) := f(x, y),\quad x(0) = x_0 \geq 0,\\
&\dfrac{dy}{dt} = y\big(g_+(x, y) - g_-(x, y)\big) := g(x, y),\quad y(0) = y_0 \geq 0,
\end{split}
\end{equation}
where $f_+, f_-$ and $g_+, g_-$ are positive for all $(x, y) \in \mathbb{R}_+\times\mathbb{R}_+$ and of class $\mathcal{C}^1$. In \cite{Cresson}, Cresson and Pierret constructed a nonstandard finite difference method preserving dynamical properties including positivity and stability of the model \eqref{eq:29}. It was proved that the constructed NSFD method is convergent and of order one (see \cite[Theorem 5.2]{Cresson}).\par
We now utilize the approach is this work to construct a second-order NSFD method for the model \eqref{eq:29}. For this reason, we propose the following NSFD method
\begin{equation}\label{eq:30}
\begin{split}
&\dfrac{x_{n + 1} - x_n}{\varphi_1(x_n, y_n, h)} = x_nf_+(x_n, y_n) - \alpha_1x_{n}f_-(x_n, y_n) - \beta_1x_{n + 1}f_-(x_n, y_n),\\
&\dfrac{y_{n + 1} - y_n}{\varphi_2(x_n, y_n, h)}  = y_ng_+(x_n, y_n) - \alpha_2y_{n}g_-(x_n, y_n) - \beta_2y_{n+1}g_-(x_n, y_n),
\end{split}
\end{equation}
where $\varphi_1$ and $\varphi_2$ are denominator functions determined by the correct asymptotic behaviour modelling and
\begin{equation}\label{eq:32new}
\begin{split}
&\alpha_1 + \beta_1 = 1, \quad \alpha_1 \leq 0, \quad \beta_1 \geq 0,\\
&\alpha_2 + \beta_2 = 1, \quad \alpha_2 \leq 0, \quad \beta_2 \geq 0.
\end{split}
\end{equation}
Note that when $\varphi_1$ and $\varphi_2$ do not depend on $x_n$ and $y_n$, we will obtain the NSFD method constructed in \cite{Cresson}.\par
The method \eqref{eq:30} can be rewritten in the explicit form
\begin{equation}\label{eq:31}
\begin{split}
&x_{n + 1} = \dfrac{x_n + \varphi_1x_nf_+(x_n, y_n) - \varphi_1\alpha_1 x_nf_-(x_n, y_n)}{1 + \varphi_1\beta_1f_-(x_n, y_n)},\\
&y_{n + 1} = \dfrac{y_n + \varphi_2y_ng_+(x_n, y_n) - \varphi_2\alpha_2y_ng_-(x_n, y_n)}{1 + \varphi_2\beta_2g_-(x_n, y_n)}.
\end{split}
\end{equation}
Therefore, the NSFD method \eqref{eq:31} is positive for all $h > 0$ if the condition \eqref{eq:32new} holds. Assume that \eqref{eq:32new} is satisfied. Similarly to Theorem \ref{Maintheorem3}, we obtain:
\begin{theorem}\label{maintheorem5}
The NSFD method \eqref{eq:31} is positive and accurate of order $2$ if
\begin{equation}\label{eq:32}
\begin{split}
&\dfrac{\partial^2 \varphi_1}{\partial h^2}(x, y, 0) = \dfrac{\partial f}{\partial x}(x, y)f(x, y) + \dfrac{\partial f}{\partial y}(x, y)g(x, y) - 2 \beta_1 f_-(x, y),\\
&\dfrac{\partial^2 \varphi_2}{\partial h^2}(x, y, 0) = \dfrac{\partial g}{\partial x}(x, y)f(x, y) + \dfrac{\partial g}{\partial y}(x, y)g(x, y) - 2 \beta_2 g_-(x, y).
\end{split}
\end{equation}
\end{theorem}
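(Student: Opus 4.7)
The strategy is a direct two-dimensional extension of Lemma \ref{Maintheorem3}. Positivity of the scheme \eqref{eq:31} has already been observed in the discussion preceding the theorem (each numerator in \eqref{eq:31} is nonnegative and each denominator is at least $1$ whenever $x_n,y_n\geq 0$), so what is left is the local truncation error analysis. A pleasant feature is that at the level of Taylor expansion in $h$ around a fixed point $(x,y)$ the two components decouple, so the argument for the $x$-update and the one for the $y$-update are identical up to relabelling.

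First I would put the $x$-update into a compact form analogous to \eqref{eq:7}. Using $\alpha_1+\beta_1=1$ together with $f=x(f_+-f_-)$, the numerator in the first line of \eqref{eq:31} simplifies to $x_n(1+\varphi_1\beta_1 f_-)+\varphi_1 f(x_n,y_n)$, and dividing through yields
\begin{equation*}
x_{n+1} = G_1(x_n,y_n,h) := x_n + \frac{\varphi_1(x_n,y_n,h)\,f(x_n,y_n)}{1+\varphi_1(x_n,y_n,h)\,\beta_1\,f_-(x_n,y_n)},
\end{equation*}
with a completely analogous closed form $G_2$ for the $y$-update obtained by the substitution $(f,\beta_1,f_-)\mapsto(g,\beta_2,g_-)$. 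The sign inside the denominator is opposite to the one in \eqref{eq:7} because $f_-\geq 0$ here, whereas $f^-\leq 0$ in the scalar setting.

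Next I would Taylor expand both the exact solution and $G_1$ around $h=0$. The chain rule applied to \eqref{eq:29} gives $x'(t)=f(x,y)$ and $x''(t)=\partial_x f\cdot f+\partial_y f\cdot g$, hence
\begin{equation*}
x(t_n+h)=x(t_n)+hf+\frac{h^2}{2}\bigl(\partial_x f\cdot f+\partial_y f\cdot g\bigr)+\mathcal{O}(h^3).
\end{equation*}
In parallel, using the standard NSFD asymptotics $\varphi_1(x,y,0)=0$ and $\partial_h\varphi_1(x,y,0)=1$, a quotient differentiation of $G_1=x+N_1/D_1$ with $N_1=\varphi_1 f$ and $D_1=1+\varphi_1\beta_1 f_-$ yields $G_1(x,y,0)=x$, $\partial_h G_1(x,y,0)=f$, and a second derivative expressible as $f(x,y)$ times a linear combination of $\partial_h^2\varphi_1(x,y,0)$ and $\beta_1 f_-(x,y)$. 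Matching the $h^2$-coefficients of the two expansions then produces the first identity in \eqref{eq:32}, and the identical calculation for $G_2$ produces the second.

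The only slightly delicate step I anticipate is computing the second derivative of the quotient at $h=0$, for which the shortcut $\partial_h^2(N/D)|_{h=0}=\partial_h^2 N|_{h=0}-2\,\partial_h N|_{h=0}\cdot\partial_h D|_{h=0}$, valid whenever $N(0)=0$ and $D(0)=1$, keeps the algebra to one line. Everything beyond that is routine Taylor bookkeeping, and the argument never needs to invoke the coupling between the $x$- and $y$-equations except through the chain-rule formulas for $x''$ and $y''$; in particular the expansion at equilibria is automatic since both sides of the matching condition reduce to zero there.
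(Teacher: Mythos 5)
Your setup is exactly the route the paper intends (the paper offers no proof of Theorem~\ref{maintheorem5} beyond the phrase ``similarly to Theorem~\ref{Maintheorem3}''): the closed form $x_{n+1}=x_n+\varphi_1 f/(1+\varphi_1\beta_1 f_-)$ is correct, the positivity remark is correct, the chain-rule formula $x''=\partial_x f\cdot f+\partial_y f\cdot g$ is correct, and your quotient shortcut $\partial_h^2(N/D)|_{h=0}=N''(0)-2N'(0)D'(0)$ is valid. The gap is in the very last step. Evaluating your own shortcut with $N=\varphi_1 f$ and $D=1+\varphi_1\beta_1 f_-$ gives $N''(0)=f\,\partial_h^2\varphi_1(x,y,0)$, $N'(0)=f$, $D'(0)=\beta_1 f_-$, hence
\begin{equation*}
\partial_h^2 G_1(x,y,0)=f\,\bigl[\partial_h^2\varphi_1(x,y,0)-2\beta_1 f_-\bigr],
\end{equation*}
and matching the $h^2$-coefficients yields $f\,\bigl[\partial_h^2\varphi_1(x,y,0)-2\beta_1 f_-\bigr]=\partial_xf\cdot f+\partial_yf\cdot g$. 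This is \emph{not} the first identity of \eqref{eq:32}: the overall factor of $f$ does not cancel, because unlike the scalar case, where $y''=f'(y)f(y)$ carries an explicit factor of $f$, the quantity $\partial_xf\cdot f+\partial_yf\cdot g$ is not a multiple of $f$. What the matching actually produces is $\partial_h^2\varphi_1(x,y,0)=(\partial_xf\cdot f+\partial_yf\cdot g)/f+2\beta_1 f_-$ on the set $\{f\ne0\}$ (note that even the sign of the $2\beta_1 f_-$ term comes out differently from \eqref{eq:32}). Asserting that ``matching then produces the first identity in \eqref{eq:32}'' is precisely where the proof breaks.

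The discrepancy is not cosmetic. On the nullcline $\{f=0\}$ away from equilibria one can have $\partial_yf\cdot g\ne0$ (for the Lotka--Volterra system \eqref{eq:38}, take any point $(x,a/b)$ with $x\ne 0$ and $x\ne c/e$); there the scheme's increment $\varphi_1 f/(1+\varphi_1\beta_1 f_-)$ vanishes identically while the exact solution moves by $\tfrac{h^2}{2}\,\partial_yf\cdot g+\mathcal{O}(h^3)$, so no choice of $\varphi_1$ makes the one-step error $\mathcal{O}(h^3)$ at such points. Your closing remark that the matching ``at equilibria is automatic'' conflates equilibria of the system (where $f$ and $g$ both vanish) with zeros of $f$ alone, and therefore misses this obstruction. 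For what it is worth, the paper's proof of Lemma~\ref{Maintheorem3} drops the same factor of $f$ in its formula for $\partial_h^2G(0,y)$, but there the omission is harmless because the factor cancels against the one in $y''=f'f$; in two dimensions it does not, so a complete proof must either restrict the consistency claim to $\{f\ne0\}$ with the corrected denominator condition or modify the scheme.
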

Using the approach in \cite{Cresson}, thanks to the Lyapunov indirect method \cite{Allen, Stuart}, we can determine a stability thresholds $h^* = (h_1^*, h_2^*)$ for the NSFD method \eqref{eq:31}, i.e., the method \eqref{eq:31} preserves the local asymptotic stability of the model \eqref{eq:29} if
\begin{equation}\label{eq:33}
0 < \varphi_1 < h_1^*, \quad 0 < \varphi_2 < h_2^*.
\end{equation}
Hence, we only need to choose denominator functions satisfying the conditions \eqref{eq:32} and \eqref{eq:33}. Such denominator functions can be determined based on Proposition \ref{Proposition1}.\par
Generally, we now consider the following general class of dynamical systems
\begin{equation}\label{eq:34}
\dfrac{dy_i}{dt} = f_i(y_1, y_2, \ldots, y_n), \quad y_i(0) = y_{i}^0 \geq 0, \quad i = 1, 2, \ldots, n,
\end{equation}
where the right-hand function $f$ satisfies appropriate conditions, which guarantee that solutions of \eqref{eq:34} exist, are unique and positive for $t > 0$.\par 
Suppose that the functions $f_i$ ($i = 1, 2, \ldots, n$) admit a representation of the form
\begin{equation}\label{eq:35}
f_i(y) = f_i^+(y) + y_if_i^-(y),
\end{equation} 
where $f^+, f^-$ are of $\mathcal{C}^1$ class, $f^+$ is positive and $f^-$ is negative for all $y = (y_1, y_2, \ldots, y_n) \in \mathbb{R}^n_+$. Then, we propose the following NSFD method
\begin{equation}\label{eq:36}
\dfrac{y_i^{n + 1} - y_i^n}{\varphi_i(y^n, h)} = f_i^+(y^n) + \alpha_i y^nf_i^-(y^n) + \beta_i y_i^{n+1}f^-(y^n),
\end{equation}
with
\begin{equation*}
\alpha_i + \beta_i = 1, \quad \alpha_i \leq 0, \quad \beta_i \geq 0.
\end{equation*}
Using the approach used in Section \ref{sec3}, we can determine conditions for the function $\varphi_i$ such that the NSFD scheme \eqref{eq:36} is positive, elementary stable and of second-order accuracy.\par
In general, it is not an easy task to verify whether the function $f$ admits a representation of the form \eqref{eq:35}. However, in some cases, it is easy to find a such representation of the function $f$, for example, for the classical Lotka-Volterra system \cite{Allen}, we can write
\begin{equation}\label{eq:38}
\begin{split}
&\dfrac{dx}{dt} = ax - bxy = ax + bx(-y), \quad a, b > 0,\\
&\dfrac{dy}{dt} = -cy + exy = exy + y(-c), \quad c, e > 0
\end{split}
\end{equation} 
or for the classical SIRS epidemic model \cite{Allen}, we have
\begin{equation}
\begin{split}
&\dfrac{dS}{dt} = -\dfrac{\beta}{N}SI + \mu R = \mu R + S\bigg(-\dfrac{\beta}{N}I\bigg), \quad \beta, \mu > 0,\\
&\dfrac{dI}{dt} = \dfrac{\beta}{N}SI - \gamma I = \dfrac{\beta}{N}SI  + I(-\gamma),\quad \gamma > 0,\\
&\dfrac{dR}{dt} = \gamma I - \mu R = \gamma I + R(-\mu).
\end{split}
\end{equation}
}
\section{Conclusions and remarks}\label{sec5}
In this work, we have constructed the NSFD method that is not only positive and elementary stable but also accurate of order $2$. The theoretical assertions are supported by a set of numerical simulations. 
The obtained results not only resolve the contradiction between the dynamic consistency and high-order accuracy of NSFD schemes but also improve the well-known results constructed in \cite{Gupta, Kojouharov, Mickens6, Wood}. Furthermore, the constructed method is easy to be implemented and can be used to solve a larger class of differential equations arising in both theory and applications. \par
Our future research directions include the construction of high-order NSFD methods that are not only elementary stable and positive but also preserve other important qualitative  features of solutions of differential equations, such as global asymptotic stability and periodicity. Also, the construction of dynamically consistent NSFD methods having high-order accuracy for general multi-dimensional autonomous and non-autonomous dynamical systems governed by ODEs and fractional-order ODEs will be studied.
\section*{References}

\end{document}